\newtheorem{theorem}{Theorem}
\newtheorem{lemma}[theorem]{Lemma}
\newtheorem{proposition}[theorem]{Proposition}
\newtheorem{corollary}[theorem]{Corollary}
\theoremstyle{definition}
\theoremstyle{plain}
\renewcommand{\labelenumi}{\textup{(\theenumi)}}
\title{A short note on Cuntz splice from a viewpoint of 
continuous orbit equivalence of topological Markov shifts}
\author{Kengo Matsumoto \\
Department of Mathematics \\
Joetsu University of Education \\
Joetsu, 943-8512, JAPAN
}
\date{}
\begin{document}
\maketitle

\def\det{{{\operatorname{det}}}}

\begin{abstract}
Let $A$ be an $N\times N$ irreducible matrix
 with entries in $\{0,1\}$.
We present an easy way to find an $(N+3)\times (N+3)$ irreducible matrix
$\bar{A}$ with entries in $\{0,1\}$ such that
their Cuntz--Krieger algebras 
${\mathcal{O}}_A$ 
and
${\mathcal{O}}_{\bar{A}}$
are isomorphic and  
$ \det(1 -A) = - \det(1-\bar{A}). $
As a consequence, we know that 
two Cuntz--Krieger algebras
${\mathcal{O}}_A$ 
and
${\mathcal{O}}_B$ 
are isomorphic if and only if 
the 
one-sided topological Markov shift
$(X_A, \sigma_A)$
is
continuously orbit equivalent to
either
$(X_B, \sigma_B)$
or
$(X_{\bar{B}}, \sigma_{\bar{B}}).$
\end{abstract}




\newcommand{\BF}{\operatorname{BF}}
\newcommand{\Hom}{\operatorname{Hom}}
\newcommand{\Homeo}{\operatorname{Homeo}}
\newcommand{\id}{\operatorname{id}}
\newcommand{\Ker}{\operatorname{Ker}}
\newcommand{\Ad}{\operatorname{Ad}}
\newcommand{\orb}{\operatorname{orb}}

\newcommand{\K}{\mathbb{K}}

\newcommand{\N}{\mathbb{N}}
\newcommand{\T}{\mathbb{T}}
\newcommand{\Z}{\mathbb{Z}}
\newcommand{\Zp}{{\mathbb{Z}}_+}
\def\BF{{{\operatorname{BF}}}}
\def\Ext{{{\operatorname{Ext}}}}
\def\Im{{{\operatorname{Im}}}}
\def\Max{{{\operatorname{Max}}}}
\def\Min{{{\operatorname{Min}}}}
\def\Aut{{{\operatorname{Aut}}}}
\def\Ad{{{\operatorname{Ad}}}}
\def\dim{{{\operatorname{dim}}}}
\def\det{{{\operatorname{det}}}}
\def\sgn{{{\operatorname{sgn}}}}
\def\supp{{{\operatorname{supp}}}}
\def\OB{{ {\mathcal{O}}_B}}
\def\OA{{ {\mathcal{O}}_A}}
\def\A{{ {\mathcal{A}}}}
\def\B{{ {\mathcal{B}}}}
\def\C{{ {\mathcal{C}}}}
\def\E{{ {\mathcal{E}}}}
\def\T{{ {\mathcal{T}}}}
\def\S{{ {\mathcal{S}}}}
\def\U{{ {\mathcal{U}}}}
\def\V{{ {\mathcal{V}}}}
\def\TG{{ {\frak T}_G }}
\def\AA{{ {\mathcal{A}}_A}}
\def\DB{{ {\mathcal{D}}_B}}
\def\DA{{ {\mathcal{D}}_A }}
\def\FA{{ {\mathcal{F}}_A }}
\def\E{{ {\mathcal{E}} }}

\def\OA{{{\mathcal{O}}_A}}
\def\OB{{{\mathcal{O}}_B}}
\def\FA{{{\mathcal{F}}_A}}
\def\FB{{{\mathcal{F}}_B}}
\def\DA{{{\mathcal{D}}_A}}
\def\DB{{{\mathcal{D}}_B}}
\def\HA{{{\frak H}_A}}
\def\HB{{{\frak H}_B}}
\def\Ext{{{\operatorname{Ext}}}}
\def\Max{{{\operatorname{Max}}}}
\def\Per{{{\operatorname{Per}}}}
\def\PerB{{{\operatorname{PerB}}}}
\def\Homeo{{{\operatorname{Homeo}}}}
\def\HSA{{H_{\sigma_A}(X_A)}}
\def\Out{{{\operatorname{Out}}}}
\def\Aut{{{\operatorname{Aut}}}}
\def\Ad{{{\operatorname{Ad}}}}
\def\Inn{{{\operatorname{Inn}}}}
\def\det{{{\operatorname{det}}}}
\def\exp{{{\operatorname{exp}}}}
\def\cobdy{{{\operatorname{cobdy}}}}
\def\Ker{{{\operatorname{Ker}}}}
\def\ind{{{\operatorname{ind}}}}
\def\id{{{\operatorname{id}}}}
\def\supp{{{\operatorname{supp}}}}
\def\COE{{{\operatorname{COE}}}}
\def\SCOE{{{\operatorname{SCOE}}}}



For an $N\times N$ irreducible matrix
$A$ with entries in $\{0,1\}$,
let us denote by $G(A)$ the abelian group
${\Z}^N /(1 - A^t){\Z}^N$
and 
by $u_A$ the position of the class 
$[(1,\dots,1)]$   
of the vector 
$(1,\dots,1)$
in the group $G(A)$.
Throughout this short note,
matrices are all assumed to be irreducible 
and not any permutation matrices.
J. Cuntz in \cite{C2} has shown that 
the pair
$(K_0(\OA), [1])$
of the 
$K_0$-group $K_0(\OA)$ 
of the Cuntz--Krieger algebra
$\OA$ 
and the class $[1]$ of the unit in 
$K_0(\OA)$ 
is isomorphic to 
$(G(A), u_A)$. 
In \cite{Ro}, 
 M. R{\o}rdam has shown that
$(G(A), u_A)$ is a complete invariant 
of the isomorphism class of $\OA$ (see \cite{EFW} for $N\le 3$).
For an $N\times N$ irreducible matrix
$A= [A(i,j)]_{i,j=1}^N$ with entries in $\{0,1\}$, 
the 
$(N+2)\times (N+2)$ irreducible matrix
$A_{-}$ defined by 
\begin{equation*}
A_{-}
=
\begin{bmatrix}
A(1,1)  & \dots & A(1,N-1)  & A(1,N)  & 0      &    0 \\  
\vdots  &       & \vdots    & \vdots  & \vdots & \vdots \\
A(N-1,1)& \dots & A(N-1,N-1)& A(N-1,N)& 0      &    0 \\  
A(N,1)  & \dots & A(N,N-1)  & A(N,N)  & 1      &    0 \\  
      0 & \dots & 0         & 1       & 1      &    1 \\  
      0 & \dots & 0         & 0       & 1      &    1    
\end{bmatrix} 
\end{equation*}
is called the Cuntz splice for $A$,
which has been first introduced in \cite{C86} by J. Cuntz,
related to classification problem for Cuntz--Krieger algebras.
In \cite{C86}, he had used the notation $A^-$ instead of the above $A_{-}$.
The crucial property of the Cuntz splice is that
$G(A_{-})$ is isomorphic to $G(A)$
and
$\det(1-A_{-}) = - \det(1-A)$.
The Cuntz splice 
$$
\begin{bmatrix}
1 & 1 & 0 & 0 \\
1 & 1 & 1 & 0 \\
0 & 1 & 1 & 1 \\
0 & 0 & 1 & 1   
\end{bmatrix}
$$
for
the matrix
$
\left[\begin{smallmatrix}
 1 & 1  \\
 1 & 1    
\end{smallmatrix}\right]
$
is denoted by $2_{-}$.
In the  proof of the above  R{\o}rdam's result \cite[Theorem 6.5]{Ro},
J. Cuntz's theorem \cite[Theorem 7.2]{Ro} is used 
which says that 
${\mathcal{O}}_2\cong{\mathcal{O}}_{2_{-}}$
implies 
${\mathcal{O}}_A\otimes\K\cong{\mathcal{O}}_{A_{-}}\otimes\K$
for all irreducible non-permutation matrices $A$.
Since R{\o}rdam has proved   
${\mathcal{O}}_2\cong{\mathcal{O}}_{2_{-}}$
(\cite[Lemma 6.4]{Ro}),
the result 
${\mathcal{O}}_A\otimes\K\cong{\mathcal{O}}_{A_{-}}\otimes\K$
holds for all irreducible non-permutation matrices $A$.
By using this result, R{\o}rdam has also obtained that
the group
$G(A)$ is a complete invariant of the stable isomorphism class 
of $\OA$.

Let us denote by $\BF(A)$
the abelian group
$G(A^t) = {\Z^N}/(1 -A){\Z^N}$,
which is called the Bowen--Franks group for 
$N\times N$ matrix $A$ (\cite{BF}).
Although $\BF(A)$ is isomorphic to $G(A)$ as a group,
there is no canonical isomorphism between them. 
Related to classification theory of symbolic dynamical systems,
J. Franks has shown that
the pair $( \BF(A), \sgn(\det(1-A)))$
is a complete invariant of the flow equivalence class of
the two-sided topological Markov shift
$(\bar{X}_A, \bar{\sigma}_A)$
by using Bown--Franks's result \cite{BF} 
for the group $\BF(A)$
and Parry--Sullivan's result
\cite{PS} for the determinant $\det(1-A)$.
Combining this with the  R{\o}rdam's result for the stable isomorphism classes of the Cuntz--Krieger algebras,   
$\OA$ is stably isomorphic to $\OB$
if and only if 
$(\bar{X}_A, \bar{\sigma}_A)$
is flow equivalent to
either
$(\bar{X}_B, \bar{\sigma}_B)$
or
$(\bar{X}_{B_{-}}, \bar{\sigma}_{B_{-}})$.

In \cite{MaPacific},
the author has introduced a notion of continuous orbit equivalence 
in  one-sided topological Markov shifts to classify Cuntz--Krieger algebras from a view point of topological dynamical system.
In \cite{MMKyoto}, 
H. Matui and the author have shown that  the triple
$(G(A), u_A, \sgn(\det(1- A)))$
is a complete invariant of the continuous orbit equivalence class
of the right one-sided topological Markov shift
$(X_A, \sigma_A)$.
This result is rephrased by using the above mentioned R{\o}rdam's result
for isomorphism classes of the Cuntz--Krieger algebras
such that 
the pair $(\OA, \sgn(\det(1- A)))$
is a complete invariant of the continuous orbit equivalence class
of the  one-sided topological Markov shift
$(X_A, \sigma_A)$.
The $C^*$-algebra $\mathcal{O}_{A_{-}}$
is not necessarily isomorphic to $\OA$,
whereas they are stably isomorphic,
because   
the position $u_{A_{-}}$ in $G(A_{-})$ generally 
is different from 
the position $u_{A}$ in $G(A)$. 
We note that the group $G(A)$ determines the absolute value
$| \det(1 -A)|$. 
If $G(A)$ is infinite, $\Ker(1-A)$ is not trivial so that $\det(1 - A)=0.$
If $G(A)$ is finite, it forms  a finite direct sum 
${\Z}/{m_1}{\Z} \oplus\cdots\oplus {\Z}/{m_r}{\Z}$ 
for some $m_1,\dots,m_r \in {\N}$ 
so that 
$|\det(1-A)| = m_1\cdots m_r$ (cf. \cite{C86}, \cite{CK}, \cite{Ro}).

By \cite[Lemma 3.7]{MMKyoto},
we know that there is a matrix
$A'$ with entries in $\{0,1\}$ such that 
the triple 
$(G(A), u_A, \sgn(\det(1-A)))$
is isomorphic to     
$(G(A'), u_{A'}, -\sgn(\det(1-A')))$,
which means that there exists an isomorphism
$\Phi:G(A)\rightarrow G(A')$
such that 
$\Phi(u_A) = u_{A'}$
and
$\sgn(\det(1- A)) = - \sgn(\det(1 - A'))$.
Following the given proof of 
\cite[Lemma 3.7]{MMKyoto},
the construction of the matrix $A'$ seems to be slightly complicated
and the matrix size of $A'$
  becomes much bigger than that of $A$.
It is not an easy task to present the matrix $A'$ for the given matrix $A$
in a concrete way.

In this short note,
we directly present 
an $(N+3)\times (N+3)$ matrix $\bar{A}$ 
with entries in $\{0,1\}$ such that
$(G(A), u_A, \sgn(\det(1-A)))$
is isomorphic to    
$(G(\bar{A}), u_{\bar{A}}, -\sgn(\det(1-\bar{A})))$.
The matrix $\bar{A}$ is constructed such that  
if $A$ is an irreducible non-permutation matrix,
so is $\bar{A}$.


We define
\begin{equation*}
A^{\circ}
=
\begin{bmatrix}
 A(1,1) & \dots & A(1,N-1)  & A(1,N)  & 0      \\  
 \vdots &       & \vdots    & \vdots  & \vdots \\  
A(N-1,1)& \dots & A(N-1,N-1)& A(N-1,N)& 0      \\  
  0     & \dots & 0         & 0       & 1      \\  
A(N,1)  & \dots & A(N,N-1)  & A(N,N)  & 0    
\end{bmatrix}
\end{equation*}
and
\begin{equation}
\bar{A}= (A^{\circ})_{-}
=
\begin{bmatrix}
A(1,1)  & \dots & A(1,N-1)  & A(1,N)  & 0      & 0      &    0 \\  
\vdots  &       & \vdots    & \vdots  & \vdots & \vdots & \vdots \\
A(N-1,1)& \dots & A(N-1,N-1)& A(N-1,N)& 0      & 0      &    0 \\  
      0 & \dots & 0         & 0       & 1      & 0      &    0 \\  
A(N,1)  & \dots & A(N,N-1)  & A(N,N)  & 0      & 1      &    0 \\  
      0 & \dots & 0         & 0       & 1      & 1      &    1 \\  
      0 & \dots & 0         & 0       & 0      & 1      &    1    
\end{bmatrix}. \label{eq:Aminus}
\end{equation}
The operation $A \rightarrow A^{\circ}$ 
is nothing but an expansion defined by Parry--Sullivan in \cite{PS},
and preserves their determinant:  $\det(1-A) = \det(1 - A^{\circ})$.
The following figure is a graphical expression
of the matrix $\bar{A}$ from $A$.

\begin{figure}[htbp]
\begin{center}
\input{winfig102}
\end{center}
\caption{}
\end{figure}

We provide two lemmas.
The first one is seen in \cite{BF}.
The second one is seen in \cite{C86} and \cite{Ro} in a different form.
\begin{lemma}[{\cite[Theorem 1.3]{BF}}]
The map 
\begin{equation*}
\eta_A: (x_1,\dots,x_{N-1}, x_N,x_{N+1}) \in {\Z}^{N+1}
\rightarrow
(x_1,\dots,x_{N-1}, x_N +x_{N+1}) \in {\Z}^N
\end{equation*}
induces an isomorphism $\bar{\eta}_A$
from
$G(A^\circ)$ to $G(A)$
such that 
$\bar{\eta}_A([(1,\dots,1,0)]) = u_A$.
\end{lemma}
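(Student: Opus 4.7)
My plan is to verify the three claims of the lemma in order: well-definedness of $\bar\eta_A$, the fact that it is an isomorphism, and the image of $[(1,\dots,1,0)]$.

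First I would check well-definedness by computing $\eta_A((1-(A^\circ)^t)e_j)$ for each standard basis vector $e_j$ of $\Z^{N+1}$ and showing the result lies in $(1-A^t)\Z^N$. Three cases arise from the block structure of $A^\circ$. For $j \le N-1$, the $j$-th row of $A^\circ$ is the $j$-th row of $A$ with a $0$ appended, and since $\eta_A$ fixes $e_i$ for $i \le N-1$ and sends both $e_N$ and $e_{N+1}$ to $e_N$, the image equals $(1-A^t)e_j$. For the inserted vertex $j = N$ the relation is $(1-(A^\circ)^t)e_N = e_N - e_{N+1}$, which is killed by $\eta_A$. For $j = N+1$, whose row carries the original $N$-th row of $A$, a short calculation gives $(1-A^t)e_N \in \Z^N$.

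Surjectivity of $\bar\eta_A$ is immediate since $\eta_A$ itself is surjective: indeed $\eta_A(e_i) = e_i$ for $i \le N$. For injectivity my plan is to exhibit a two-sided inverse by defining $\zeta : \Z^N \to G(A^\circ)$ on generators by $\zeta(e_i) = [e_i]$ and showing it factors through $G(A)$. The key relation $[e_N] = [e_{N+1}]$ in $G(A^\circ)$ is exactly the content of the case $j = N$ above, and combining it with the defining relations of $G(A^\circ)$ coming from rows $1,\ldots,N-1$ and $N+1$ yields precisely the relations $[e_j] = \sum_{i=1}^N A(j,i)[e_i]$ for $j \le N$, which define $G(A)$. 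A check on generators then shows $\zeta$ and $\bar\eta_A$ are mutually inverse.

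Finally, the image of the distinguished class is immediate: $\eta_A(1,\ldots,1,0) = (1,\ldots,1) \in \Z^N$, so $\bar\eta_A([(1,\ldots,1,0)]) = [(1,\ldots,1)] = u_A$ by definition of $u_A$. I anticipate no genuine obstacle in the argument; the only place to take care is the bookkeeping of indices, since the inserted vertex occupies position $N$ in $A^\circ$ while the original $N$-th vertex of $A$ has been relabeled to position $N+1$.
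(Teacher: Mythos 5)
Your argument is correct. Note that the paper itself gives no proof of this lemma; it is quoted from Bowen--Franks \cite[Theorem 1.3]{BF}, so there is no in-paper proof to compare against. Your verification is the natural self-contained one and all three steps check out: the images $\eta_A\bigl((1-(A^\circ)^t)e_j\bigr)$ are $(1-A^t)e_j$ for $j\le N-1$, $0$ for $j=N$, and $(1-A^t)e_N$ for $j=N+1$, which gives well-definedness; the map $\zeta$ kills $(1-A^t)\Z^N$ because the relations $[e_N]=[e_{N+1}]$ and $[e_{N+1}]=\sum_k A(N,k)[e_k]$ in $G(A^\circ)$ combine to give $[e_N]=\sum_k A(N,k)[e_k]$; and the unit computation is immediate. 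Your route to bijectivity via an explicit two-sided inverse differs in flavor from how the paper handles the companion Lemma 2 (where injectivity and surjectivity are checked directly by solving the linear system and exhibiting a preimage decomposition), but both are elementary and your inverse-map argument is, if anything, cleaner here since the relation $[e_N]=[e_{N+1}]$ makes the inverse obvious on generators.
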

\begin{lemma}[{cf. \cite[Proposition 2]{C86}, \cite[Proposition 7.1]{Ro}}]
The map 
\begin{equation*}
\xi_A: (x_1,\dots,x_N) \in {\Z}^N
\rightarrow
(x_1,\dots,x_N, 0,0) \in {\Z}^{N+2}
\end{equation*}
induces an isomorphism $\bar{\xi}_A$
from
$G(A)$ to $G(A_{-})$
such that 
$\bar{\xi}_A([(1,\dots,1,0)]) = u_{A_{-}}$.
\end{lemma}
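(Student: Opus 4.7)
The plan is to compute directly with the $(N+2)\times(N+2)$ matrix $1-A_{-}^t$, whose structure is almost block lower-triangular. From the definition of $A_{-}$ one reads off that columns $N+1$ and $N+2$ of $1-A_{-}^t$ are
$
(0,\dots,0,-1,0,-1)^T
\text{ and }
(0,\dots,0,0,-1,0)^T,
$
with nonzero entries concentrated in positions $N,N+1,N+2$, and the only entry of $1-A_{-}^t$ coupling its first $N$ coordinates with its last two is the $-1$ at position $(N,N+1)$.

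To establish the isomorphism $G(A_{-})\cong G(A)$, I would perform the single row operation of replacing row $N$ of $1-A_{-}^t$ by (row $N$) $-$ (row $N+2$). This kills the coupling entry at position $(N,N+1)$ and leaves the matrix in the block form
$
\bigl(\begin{smallmatrix} 1-A^t & 0 \\ C & D \end{smallmatrix}\bigr),
$
with $D=\bigl(\begin{smallmatrix} 0 & -1 \\ -1 & 0 \end{smallmatrix}\bigr)\in GL_2(\Z)$. Since $D\Z^2=\Z^2$, the image of this matrix on $\Z^{N+2}$ is exactly $(1-A^t)\Z^N\oplus\Z^2$, so its cokernel is $G(A)$. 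Tracing the elementary row operation backwards, the induced isomorphism $G(A_{-})\to G(A)$ is $[(w_1,\dots,w_{N+2})]\mapsto [(w_1,\dots,w_{N-1},w_N-w_{N+2})]$, whose inverse on representatives is $[(z_1,\dots,z_N)]\mapsto[(z_1,\dots,z_N,0,0)]$; this is exactly $\bar{\xi}_A$, which is therefore well-defined and an isomorphism.

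For the base-point identity, the image $\xi_A(1,\dots,1,0)=(1,\dots,1,0,0,0)\in\Z^{N+2}$ is congruent to $(1,\dots,1)=u_{A_{-}}$ modulo $(1-A_{-}^t)\Z^{N+2}$, witnessed by $(1-A_{-}^t)(-e_{N+1}-e_{N+2})=(0,\dots,0,1,1,1)^T$ (nonzero entries at positions $N,N+1,N+2$). This gives $\bar{\xi}_A([(1,\dots,1,0)])=u_{A_{-}}$.

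The main obstacle is the index-level bookkeeping needed to extract the block structure of $1-A_{-}^t$ from the Cuntz-splice definition of $A_{-}$: one must carefully verify that only rows and columns indexed by $1,\dots,N$ carry dependence on $A$, and that subtracting row $N+2$ from row $N$ really does clear the unique coupling entry without disturbing the top-left $1-A^t$ block. Once this is in place, the rest is elementary linear algebra over $\Z$.
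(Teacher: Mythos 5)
Your proof is correct, and it reaches the conclusion by a genuinely different organization of the same integer linear algebra. The paper verifies the three required properties of $\bar{\xi}_A$ one at a time by hand: well-definedness, by exhibiting the explicit preimage $(y_1,\dots,y_N,0,-y_N)$ of $\xi_A((1-A^t)y)$ under $1-A_{-}^t$; injectivity, by solving $(x,0,0)=(1-A_{-}^t)z$ to force $z_{N+1}=0$ and $z_{N+2}=-z_N$; and surjectivity, by the decomposition $[(x_1,\dots,x_{N+2})]=\bar{\xi}_A([(x_1,\dots,x_{N-1},x_N-x_{N+2})])$ --- which is precisely the isomorphism $G(A_{-})\to G(A)$ that falls out of your row reduction. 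You instead package all three steps into a single $GL_{N+2}(\Z)$ computation: subtracting row $N+2$ (supported only in column $N+1$) from row $N$ clears the unique entry of the top-right $N\times 2$ block without disturbing $1-A^t$, and the resulting block form with $D=\left(\begin{smallmatrix}0&-1\\-1&0\end{smallmatrix}\right)\in GL_2(\Z)$ identifies the cokernel at once and hands you the inverse map explicitly; the base-point witness $(1-A_{-}^t)(-e_{N+1}-e_{N+2})=(0,\dots,0,1,1,1)^T$ also checks out. What your route buys is economy (one $GL$ move instead of three separate verifications) at the mild cost of having to argue that $\bar{\xi}_A$ is well-defined because it inverts an already-constructed isomorphism, which you do correctly. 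One wording quibble: the $-1$ at $(N,N+1)$ is not the \emph{only} entry of $1-A_{-}^t$ coupling the first $N$ coordinates to the last two --- there is also a $-1$ at $(N+1,N)$ --- but that entry lives in the bottom-left block $C$, which is irrelevant to the block-triangular cokernel computation, and your displayed block form correctly retains it.
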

\begin{proof}
For $ y =(y_1,\dots,y_N) \in {\Z}^N$,
put
$$
z = 
\begin{bmatrix}
z_1\\
\vdots\\
z_N
\end{bmatrix}
= (1 -A^t)
\begin{bmatrix}
y_1\\
\vdots\\
y_N
\end{bmatrix}.
$$
We then have
$$
\xi_A(z) = 
\begin{bmatrix}
z_1\\
\vdots\\
z_N \\
0\\
0
\end{bmatrix}
= (1 -A_{-}^t)
\begin{bmatrix}
y_1\\
\vdots\\
y_N \\
0 \\
-y_N
\end{bmatrix}.
$$
Hence we have
$\xi_A((1 - A^t){\Z}^N) \subset (1 - A_{-}^t){\Z}^{N+2}$
so that 
$\xi_A: {\Z}^N \rightarrow {\Z}^{N+2}$
induces a homomorphism 
from $G(A)$ to $G(A_{-})$
denoted by $\bar{\xi}_A$.
Suppose that
$[\xi(x_1,\dots,x_N)] =0 $ in $G(A_{-})$
so that
$$
\begin{bmatrix}
x_1\\
\vdots\\
x_N \\
0 \\
0
\end{bmatrix}
= (1 -A_{-}^t)
\begin{bmatrix}
z_1\\
\vdots\\
z_N \\
z_{N+1} \\
z_{N+2}
\end{bmatrix}
$$
for some 
$(z_1,\dots,z_{N+2}) \in {\Z}^{N+2}$.
It then follows that 
$z_{N+1}=0, z_{N+2} = -z_N$ so that 
$$
\begin{bmatrix}
x_1\\
\vdots\\
x_N 
\end{bmatrix}
= (1 -A^t)
\begin{bmatrix}
z_1\\
\vdots\\
z_N 
\end{bmatrix}.
$$
This implies 
$[(x_1,\dots,x_N)] =0 $ in $G(A)$
and hence $\bar{\xi}_A$ is injective.

For 
$(x_1,\dots, x_N,x_{N+1}, x_{N+2}) \in {\Z}^{N+2}$,
we have 
$$
\begin{bmatrix}
x_1\\
\vdots\\
x_N \\
x_{N+1} \\
x_{N+2} 
\end{bmatrix}
=
\begin{bmatrix}
x_1\\
\vdots\\
x_{N-1} \\
x_N - x_{N+2} \\
0 \\
0 
\end{bmatrix}
+
\begin{bmatrix}
0\\
\vdots\\
0 \\
x_{N+2} \\
x_{N+1} \\
x_{N+2} 
\end{bmatrix}
=
\begin{bmatrix}
x_1\\
\vdots\\
x_{N-1} \\
x_N - x_{N+2} \\
0 \\
0 
\end{bmatrix}
+
 (1 - A_{-}^t)
\begin{bmatrix}
0  \\
\vdots\\
0   \\
-z_{N+2} \\
-z_{N+1}
\end{bmatrix}.
$$
This implies that
$
[(x_1,\dots,x_N, x_{N+1}, x_{N+2})]
=
\bar{\xi}_A([(x_1,\dots,x_{N-1}, x_N - x_{N-2})])
$
in
$G(A_{-})$.
Therefore  $\bar{\xi}_A: G(A) \rightarrow G(A_{-})$
is surjective and hence an isomorphism.
In particular, we see that
$
[(1,\dots,1,1,1)]
=
\bar{\xi}_A([(1,\dots,1, 0)])
$
in
$G(A_{-})$.
\end{proof}
We have the following theorem by the preceding two lemmas.
\begin{theorem}
For an $N\times N$ matrix $A$ 
with entries in $\{0,1\}$,
let
$\bar{A}$
be
the
$(N+3)\times (N+3)$ matrix  with entries in $\{0,1\}$
defined in \eqref{eq:Aminus}.
Then there exists an isomorphism
$\Phi: G(A) \rightarrow G(\bar{A})$
such that $\Phi(u_A) = u_{\bar{A}}$
and
the matrices $A, \bar{A}$ satisfy
$\det(1-A) = - \det(1 - \bar{A})$.
If $A$ is an irreducible non-permutation matrix,
so is $\bar{A}$.
\end{theorem}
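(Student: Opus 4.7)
The plan is to prove the theorem by composing the two preceding lemmas, with $\bar{A} = (A^{\circ})_{-}$ used as a two-step construction: first the Parry--Sullivan expansion $A \mapsto A^{\circ}$, then the Cuntz splice $A^{\circ} \mapsto (A^{\circ})_{-}$. Accordingly, define
\begin{equation*}
\Phi = \bar{\xi}_{A^{\circ}} \circ \bar{\eta}_A^{-1} : G(A) \longrightarrow G(A^{\circ}) \longrightarrow G((A^{\circ})_{-}) = G(\bar{A}),
\end{equation*}
which is an isomorphism because each factor is. To track the unit class, I would apply Lemma~1 to get $\bar{\eta}_A^{-1}(u_A) = [(1,\dots,1,0)] \in G(A^{\circ})$ (the final $0$ is in position $N+1$), and then apply Lemma~2 to the $(N+1)\times(N+1)$ matrix $A^{\circ}$ to get $\bar{\xi}_{A^{\circ}}([(1,\dots,1,0)]) = u_{(A^{\circ})_{-}} = u_{\bar{A}}$. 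Composition then yields $\Phi(u_A) = u_{\bar{A}}$.

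The determinant identity is immediate from the two known properties recalled earlier in the paper: the Parry--Sullivan expansion satisfies $\det(1 - A^{\circ}) = \det(1 - A)$, while the Cuntz splice satisfies $\det(1 - B_{-}) = -\det(1 - B)$ for any square $0/1$ matrix $B$. Applying the latter with $B = A^{\circ}$ gives $\det(1 - \bar{A}) = -\det(1 - A^{\circ}) = -\det(1 - A)$, as required.

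It remains to verify that $\bar{A}$ is irreducible and is not a permutation matrix whenever $A$ has these properties. Non-permutation is essentially automatic: the row of $\bar{A}$ corresponding to the vertex $v_{N+2}$ has entries $(0,\dots,0,1,1,1)$, so it has three nonzero entries and $\bar{A}$ cannot be a permutation matrix. For irreducibility I would argue from the graph picture: passing from $A$ to $A^{\circ}$ merely replaces the loop edges at $v_N$ by a length-two path through the inserted vertex $v_{N+1}$, so every directed path in the graph of $A$ lifts to one in the graph of $A^{\circ}$ (and conversely every path in $A^{\circ}$ projects to one in $A$), preserving strong connectedness. The Cuntz splice then attaches to $v_{N+1}$ a two-vertex subgraph with edges $v_{N+1} \leftrightarrow v_{N+2} \leftrightarrow v_{N+3}$ (plus self-loops), so each of the new vertices $v_{N+2}, v_{N+3}$ can reach and be reached from $v_{N+1}$, and hence from every other vertex; this preserves irreducibility.

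The main conceptual point is thus the correct matching of unit classes under the composition, and the main obstacle, if any, is the bookkeeping of which coordinate positions carry the $0$ in the representatives $(1,\dots,1,0)$ appearing in the two lemmas, since these are vectors in $\Z^{N+1}$ (for Lemma~1 applied to $A$) and in $\Z^{N+1}$ again (for Lemma~2 applied to $A^{\circ}$, whose ambient dimension is $N+1$ before the splice). Once this is untangled, the remaining steps are bookkeeping and an appeal to the already-cited properties of the operations $(\cdot)^{\circ}$ and $(\cdot)_{-}$.
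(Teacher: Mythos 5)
Your proposal is correct and follows essentially the same route as the paper: the paper also defines $\Phi = \bar{\xi}_{A^{\circ}}\circ\bar{\eta}_A^{-1}$, tracks the unit via $\bar{\xi}_{A^{\circ}}([(1,\dots,1,0)]) = u_{\bar{A}}$, and obtains the determinant identity from $\det(1-\bar{A}) = -\det(1-A^{\circ}) = -\det(1-A)$. Your additional graph-theoretic check of irreducibility and the non-permutation property is a welcome detail the paper leaves implicit.
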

\begin{proof}
Define
$\Phi: G(A) \rightarrow G(\bar{A})$
by 
$\Phi = \bar{\xi}_{A^\circ}\circ\bar{\eta}_A^{-1}$
so that
 $\Phi(u_A) 
=\bar{\xi}_{A^\circ}([(1,\dots,1,0)]) =u_{\bar{A}}$.
Since
$\det(1 - \bar{A}) = - \det(1 - A^\circ) 
=-\det(1-A),$
we see the desired assertion.
\end{proof}

Let $P$ be an $N \times N$ permutation matrix coming from a permutation 
of the set $\{1,2,\dots,N\}$.
Since  there exists a natural isomorphism
$\Phi_P: G(A) \longrightarrow G(PAP^{-1})$ such that 
$\Phi_P(u_A) = u_{PAP^{-1}}$ and $\det( 1- A) = \det(1 -PAP^{-1})$,
the triplet
$(G(A), u_A,\det(1-A))$
does not depend on the choice of the vertex $v_N$ in the directed graph of the matrix $A$.

We have some corollaries.
\begin{corollary}
Let
$A$ be an irreducible non-permutation matrix with entries in $\{0,1\}$.
Then
$\OA$ is isomorphic to
${\mathcal{O}}_{\bar{A}}$
and
$\det(1-A) = - \det(1 - \bar{A})$.
\end{corollary}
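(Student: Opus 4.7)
The plan is to derive this corollary directly from the preceding theorem together with R{\o}rdam's classification theorem recalled in the introduction. First, I would apply the theorem to the matrix $A$: this immediately yields the determinant identity $\det(1-A) = -\det(1-\bar{A})$, as well as a pointed isomorphism $\Phi : (G(A), u_A) \to (G(\bar{A}), u_{\bar{A}})$. The determinant half of the corollary is thus already in hand.

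For the $*$-isomorphism $\OA \cong \mathcal{O}_{\bar{A}}$, the strategy is to invoke R{\o}rdam's theorem from \cite{Ro}, recalled in the opening of the note, which states that the pointed pair $(G(A), u_A) \cong (K_0(\OA), [1])$ is a complete invariant for the isomorphism class of the unital Cuntz--Krieger algebra associated with an irreducible non-permutation $\{0,1\}$-matrix. Since the theorem guarantees that $\bar{A}$ remains irreducible and non-permutation, $\mathcal{O}_{\bar{A}}$ lies in the class to which R{\o}rdam's theorem applies, and the existence of the pointed isomorphism $\Phi$ then forces $\OA \cong \mathcal{O}_{\bar{A}}$.

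The only step that demands any genuine verification, and thus the main potential obstacle, is confirming that $\bar{A}$ inherits both irreducibility and the non-permutation property from $A$. This follows from the graphical description displayed above the theorem: the expansion $A \to A^\circ$ merely subdivides the edges leaving $v_N$ by inserting a pass-through vertex, which preserves strong connectedness, and the subsequent Cuntz splice attaches a two-vertex strongly connected block to $v_N$ via bidirectional edges, again preserving irreducibility; the self-loops at $v_{N+2}$ and $v_{N+3}$ preclude $\bar{A}$ from being a permutation matrix. Once this structural check is discharged, both conclusions of the corollary fall directly out of the theorem and R{\o}rdam's invariant.
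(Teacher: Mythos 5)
Your proposal is correct and is exactly the argument the paper intends: the corollary follows immediately from the preceding theorem (which already asserts that $\bar{A}$ is irreducible and non-permutation, gives the pointed isomorphism $(G(A),u_A)\cong(G(\bar{A}),u_{\bar{A}})$, and gives the determinant identity) combined with R{\o}rdam's classification theorem quoted in the introduction. The paper offers no separate proof precisely because this is the only step needed, so your route coincides with the paper's.
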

Let $\bar{1}$ denote the matrix
$$
\begin{bmatrix}
0 & 1 & 0 & 0 \\
1 & 0 & 1 & 0 \\
0 & 1 & 1 & 1 \\
0 & 0 & 1 & 1   
\end{bmatrix}
$$
which is the matrix $\bar{A}$ for the $1\times 1$ matrix
$A = [1]$.
By the above theorem, we have
\begin{corollary}
$(K_0({\mathcal{O}}_{\bar{1}}),u_{\bar{1}})
=(\Z, 1)$. 
\end{corollary}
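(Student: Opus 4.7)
The plan is to invoke the preceding theorem with the trivial choice $A = [1]$ and then translate the resulting statement about $G(\bar 1)$ into a statement about $K_0$ via the Cuntz--R\o rdam identification. The first step is a direct calculation of $(G([1]),u_{[1]})$: since $1 - [1]^{t} = 0$, we have $G([1]) = \Z/0\Z = \Z$ and the class $u_{[1]} = [(1)]$ is the generator $1 \in \Z$.

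Next, I would apply the theorem to $A=[1]$. The theorem produces an isomorphism
$\Phi : G([1]) \to G(\bar 1)$ with $\Phi(u_{[1]}) = u_{\bar 1}$, and thus
$(G(\bar 1),u_{\bar 1}) \cong (\Z,1)$. I would remark that although $A=[1]$ is itself a permutation matrix (so the final sentence of the theorem is vacuous), the existence of the isomorphism $\Phi$ is established purely via the two preceding lemmas and holds for any $\{0,1\}$-matrix, so this application is legitimate.

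Finally, to convert this to $K_0$, I would check by inspection that the $4\times 4$ matrix $\bar 1$ displayed just above the corollary is irreducible (one easily traces directed paths $1\to2\to3\to4$ and $4\to3\to2\to1$) and is clearly not a permutation matrix. Then Cuntz's theorem recalled in the introduction gives $(K_0(\mathcal O_{\bar 1}),[1]) \cong (G(\bar 1),u_{\bar 1})$, which combined with the previous step yields $(K_0(\mathcal O_{\bar 1}),u_{\bar 1}) = (\Z,1)$.

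There is really no obstacle here: the corollary is a bookkeeping consequence of the theorem once one notices that $G([1]) = \Z$ with canonical generator $1 = u_{[1]}$; the only small subtlety to flag is that the theorem's isomorphism $\Phi$ must be applied in a case where $A$ is a permutation matrix, but this does not affect the validity of $\Phi$ itself, since irreducibility and the non-permutation property are used only to identify $G(\bar A)$ with $K_0(\mathcal O_{\bar A})$, and these are verified directly for $\bar 1$.
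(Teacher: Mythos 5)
Your proposal is correct and follows exactly the paper's route: the paper derives this corollary solely ``by the above theorem'' applied to $A=[1]$, using $G([1])=\Z/(1-1)\Z=\Z$ with $u_{[1]}=1$ and the Cuntz identification of $(K_0(\mathcal{O}_{\bar{1}}),[1])$ with $(G(\bar{1}),u_{\bar{1}})$. Your extra remarks (that $[1]$ is a permutation matrix but the theorem's isomorphism $\Phi$ does not need the non-permutation hypothesis, and that $\bar{1}$ itself is irreducible and not a permutation matrix) are correct and only make explicit what the paper leaves implicit.
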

Hence the simple purely infinite $C^*$-algebra 
${\mathcal{O}}_{\bar{1}}$ has the same K-theory as 
the $C^*$-algebra 
${\mathcal{O}}_{1} = C(S^1)$ 
of the continuous functions on the unit circle $S^1$
with the positions of their units,
whereas 
$(K_0({\mathcal{O}}_{{1_{-}}}),u_{{1_{-}}})
=(\Z, 0)$
for the matrix
$
1_{-}
=
\left[\begin{smallmatrix}
 1 & 1 & 0 \\
 1 & 1 & 1 \\
 0 & 1 & 1   
\end{smallmatrix}\right]
$
by \cite{EFW}
(cf. \cite[p. 150]{C86}).

The following corollary has been shown in \cite{MMKyoto}. 
Its proof is now easy by using \cite{Ro}.
\begin{corollary}[{\cite[Lemma 3.7]{MMKyoto}}]
Let $F$ be  a finitely generated abelian group
and $u$ an element of $F$.
Let $s=0$ when $F$ is infinite
and $s= -1$ or $1$ when $F$ is finite.
Then there exists an irreducible non-permutation matrix
$A$ such that 
\begin{equation*}
(F,u,s) = (G(A), u_A, \sgn(\det(1 - A)).
\end{equation*}   
\end{corollary}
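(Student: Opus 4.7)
The plan is to combine R{\o}rdam's realization theorem from \cite{Ro} with the Theorem above. By \cite{Ro}, for any finitely generated abelian group $F$ with distinguished element $u$, there exists an irreducible non-permutation matrix $B$ with entries in $\{0,1\}$ such that $(G(B), u_B) \cong (F, u)$. This reduces the problem to arranging the correct sign of $\det(1-B)$.

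If $F$ is infinite, then $\Ker(1-B^t)$ is nontrivial, so $\det(1-B) = 0$ and hence $\sgn(\det(1-B)) = 0 = s$; we simply take $A := B$.

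If $F$ is finite, then $(1-B^t){\Z}^N$ has finite index in ${\Z}^N$, so $\det(1-B) \neq 0$ and $\sgn(\det(1-B)) \in \{-1, +1\}$. If this sign already equals $s$, set $A := B$. Otherwise, set $A := \bar{B}$, where $\bar{B}$ is constructed from $B$ as in \eqref{eq:Aminus}. By the Theorem above, $\bar{B}$ is again an irreducible non-permutation matrix with entries in $\{0,1\}$, satisfying $(G(\bar{B}), u_{\bar{B}}) \cong (G(B), u_B) \cong (F, u)$ and $\det(1-\bar{B}) = -\det(1-B)$, so that $\sgn(\det(1-\bar{B})) = -\sgn(\det(1-B)) = s$. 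In either sub-case, $A$ meets the requirements.

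There is essentially no obstacle once one accepts the two ingredients: the Theorem furnishes a sign-flipping operation that preserves $(G(\cdot), u_{\cdot})$ and stays within irreducible non-permutation $\{0,1\}$ matrices, while R{\o}rdam's range-of-invariant result furnishes at least one matrix in the prescribed $(G, u)$-class. The only subtlety worth flagging is the consistency check in the infinite case, which is resolved by the observation from the introduction that infinite $G(A)$ forces $\det(1-A) = 0$, matching the prescribed value $s = 0$.
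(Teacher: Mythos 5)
Your proposal is correct and follows essentially the same route as the paper: invoke R{\o}rdam's realization result (\cite[Proposition 6.7 (i)]{Ro}) to obtain a matrix realizing $(F,u)$, then apply the bar construction from the Theorem to flip the sign of $\det(1-\cdot)$ if needed. The only difference is that you spell out the consistency check in the infinite case, which the paper leaves implicit.
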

\begin{proof}
By \cite[Proposition 6.7 (i)]{Ro},
we know that 
there exists an irreducible non-permutation matrix
$A$ such that 
$(F,u) = (G(A), u_A).
$
If $s = \sgn(\det(1 - A))$, 
the matrix $A$ is the desired one,
otherwise $\bar{A}$ is the desired one.
\end{proof}
Let $A$ and $B$ be two irreducible non-permutation matrices with entries in 
$\{0,1\}$.
The one-sided topological Markov shifts
$(X_A,\sigma_A)$
and
$(X_B,\sigma_B)$
are said to be 
{\it flip}\/ continuously orbit  equivalent
if
$(X_A,\sigma_A)$
is continuously orbit equivalent to
either
$(X_B,\sigma_B)$
or
$(X_{\bar{B}},\sigma_{\bar{B}})$.
Similarly  
two-sided topological Markov shifts
$(\bar{X}_A, \bar{\sigma}_A)$
and
$(\bar{X}_B, \bar{\sigma}_B)$
are  said to be
{\it flip}\/  flow 
equivalent
if $(\bar{X}_A, \bar{\sigma}_A)$
 is flow equivalent to 
either
$(\bar{X}_B, \bar{\sigma}_B)$,
or 
$(\bar{X}_{\bar{B}}, \bar{\sigma}_{\bar{B}})$.
We thus have the following corollaries.
\begin{corollary}
Let
$A, B$ be  irreducible and not any permutation matrices with entries in $\{0,1\}$.
\begin{enumerate}
\renewcommand{\theenumi}{\roman{enumi}}
\renewcommand{\labelenumi}{\textup{(\theenumi)}}
\item
$\OA$ is isomorphic to
$\OB$
if and only if the one-sided topological Markov shifts
$(X_A,\sigma_A)$
and
$(X_B,\sigma_B)$
are  flip continuously orbit  equivalent.
\item 
$\OA$ is stably isomorphic to
$\OB$
if and only if
the two-sided topological Markov shifts
$(\bar{X}_A, \bar{\sigma}_A)$
and
$(\bar{X}_B, \bar{\sigma}_B)$
are  flip flow equivalent.
\end{enumerate}
\end{corollary}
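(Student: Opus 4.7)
\bigskip

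\textbf{Proof plan.} The strategy is to combine two complete invariants already available in the literature with the main theorem of this note, which lets us flip the sign of $\det(1-B)$ without disturbing the Cuntz--Krieger algebra of $B$.

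For (i), I would start from the Matsumoto--Matui result cited earlier, namely that the pair $(\OA,\sgn(\det(1-A)))$ is a complete invariant of the continuous orbit equivalence class of $(X_A,\sigma_A)$. The forward implication is where the theorem of this note does the work: assume $\OA\cong\OB$. By the theorem, $\OB\cong{\mathcal{O}}_{\bar{B}}$ and $\sgn(\det(1-\bar B))=-\sgn(\det(1-B))$. Hence $\sgn(\det(1-A))$ agrees with $\sgn(\det(1-B))$ or with $\sgn(\det(1-\bar B))$ (the case $\det(1-A)=0$ forces $\det(1-B)=0$ automatically, since $G(A)\cong G(B)$ determines whether $|\det(1-A)|$ vanishes). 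In the first case $(X_A,\sigma_A)$ is continuously orbit equivalent to $(X_B,\sigma_B)$ by Matsumoto--Matui, in the second case to $(X_{\bar B},\sigma_{\bar B})$; either way $(X_A,\sigma_A)$ and $(X_B,\sigma_B)$ are flip continuously orbit equivalent. The reverse implication is immediate: continuous orbit equivalence of $(X_A,\sigma_A)$ with either $(X_B,\sigma_B)$ or $(X_{\bar B},\sigma_{\bar B})$ yields $\OA\cong\OB$ (using $\OB\cong{\mathcal{O}}_{\bar B}$ again in the second case).

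For (ii), I would follow the same template with flow equivalence in place of continuous orbit equivalence, and with the Franks/Bowen--Franks invariant $(\BF(A),\sgn(\det(1-A)))$ in place of the Matsumoto--Matui invariant. R{\o}rdam's theorem tells us that $\OA$ is stably isomorphic to $\OB$ iff $G(A)\cong G(B)$, equivalently $\BF(A)\cong\BF(B)$. Assuming stable isomorphism, the theorem of this note gives $\BF(\bar B)\cong\BF(B)\cong\BF(A)$ (as abstract groups; note that for flow equivalence only the isomorphism class of $\BF$ is needed, unlike the pointed invariant for (i)), and again the sign either matches $B$ or matches $\bar B$, so by Franks's theorem $(\bar X_A,\bar\sigma_A)$ is flow equivalent to one of $(\bar X_B,\bar\sigma_B)$ or $(\bar X_{\bar B},\bar\sigma_{\bar B})$. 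Conversely, flip flow equivalence together with R{\o}rdam's and Franks's theorems (and $\OB\cong{\mathcal{O}}_{\bar B}$) yields stable isomorphism of $\OA$ and $\OB$.

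The main (minor) obstacle is book-keeping in the degenerate sign regime $\det(1-A)=0$: one must check that in this case flip continuous orbit equivalence reduces to ordinary continuous orbit equivalence, so nothing is lost. This is handled by the observation that $G(A)$ determines $|\det(1-A)|$, so the matrices $A$ and $\bar A$ simultaneously have vanishing or non-vanishing determinant, and the sign condition in the Matsumoto--Matui and Franks invariants is automatic when both determinants are zero. Apart from this remark, the whole proof is a short diagram chase through the two classification theorems, with our theorem supplying the sign flip on the $B$ side.
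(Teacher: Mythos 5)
Your proposal is correct and is essentially the argument the paper intends: the corollary is stated without explicit proof precisely because it is the assembly of R{\o}rdam's classification, the Matsumoto--Matui invariant $(\OA,\sgn(\det(1-A)))$, Franks's flow-equivalence invariant, and the main theorem's sign flip $\det(1-\bar B)=-\det(1-B)$ with $(G(B),u_B)\cong(G(\bar B),u_{\bar B})$, exactly as you describe. Your extra remark that $G(A)$ determines $|\det(1-A)|$, handling the degenerate sign case, matches the observation already made in the paper's introduction.
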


Let us denote by $[\OA]$ 
the isomorphism class of the Cuntz--Krieger algebra $\OA$ as a $C^*$-algebra.
Since $(G(A), u_A)$ is isomorphic to $(G(\bar{A}),u_{\bar{A}})$, 
we have $[\OA] = [{\mathcal{O}}_{\bar{A}}]$. 
We regard the sign  $\sgn(\det(1 -A))$ of $\det(1-A)$
as the orientation of the class $[\OA]$.
Then we can say that the pair $([\OA], \sgn(\det(1 -A)))$
is a complete invariant of the continuous orbit equivalence class of 
the one-sided topological Markov shift
$(X_A,\sigma_A)$.

In the rest of this short note, we present another square matrix $\tilde{A}$ 
of size $N+3$ from a square matrix $A=[A(i,j)]_{i,j=1}^N$ of size $N$  such that  
$\OA$ is isomorphic to ${\mathcal{O}}_{\tilde{A}}$ and
$\det( 1-A) = - \det(1 -\tilde{A})$. 
Define $(N+3) \times (N+3)$ matrix $\tilde{A}$
by setting
\begin{equation}
\tilde{A}
=
\begin{bmatrix}
A(1,1)  & \dots & A(1,N-1)  & A(1,N)  & 0      & 0      &    0 \\  
\vdots  &       & \vdots    & \vdots  & \vdots & \vdots & \vdots \\
A(N-1,1)& \dots & A(N-1,N-1)& A(N-1,N)& 0      & 0      &    0 \\  
      0 & \dots & 0         & 0       & 1      & 0      &    0 \\  
A(N,1)  & \dots & A(N,N-1)  & A(N,N)  & 0      & 1      &    0 \\  
      0 & \dots & 0         & 0       & 1      & 0      &    1 \\  
      0 & \dots & 0         & 0       & 0      & 1      &    1    
\end{bmatrix}. \label{eq:Atilde}
\end{equation}
The difference between the previous matrix $\bar{A}$ in \eqref{eq:Aminus}
 and the above matrix 
$\tilde{A}$ is the only $((N+2), (N+2))$-component.
Its graphical expression
of the matrix $\tilde{A}$ from $A$
is the following figure.
\begin{figure}[htbp]
\begin{center}
\input{winfig103}
\end{center}
\caption{}
\end{figure}

By virtue of  \cite{EFW}, we know the following proposition.
\begin{proposition}
The Cuntz--Krieger algebras ${\mathcal{O}}_{\bar{A}}$
and
${\mathcal{O}}_{\tilde{A}}$
are isomorphic, and
$\det(1 -\bar{A}) = \det( 1- \tilde{A})$.
\end{proposition}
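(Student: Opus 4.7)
\medskip
\noindent\textbf{Proof plan.} My plan is to verify that $\bar{A}$ and $\tilde{A}$ carry identical pointed Bowen--Franks data $(G(A), u_A)$, and identical determinants, and then invoke R{\o}rdam's classification theorem \cite{Ro} (with \cite{EFW} filling in the small-$N$ cases) to conclude ${\mathcal{O}}_{\bar{A}} \cong {\mathcal{O}}_{\tilde{A}}$. The preceding theorem already supplies $(G(\bar{A}), u_{\bar{A}}) \cong (G(A), u_A)$ and $\det(1-\bar{A}) = -\det(1-A)$, so it suffices to establish the corresponding assertions for $\tilde{A}$.

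\smallskip
\noindent\textbf{Determinant.} The matrices $1 - \bar{A}$ and $1 - \tilde{A}$ agree in every entry except the $(N+2, N+2)$ position, where they differ by exactly $1$. By multilinearity of $\det$ in the $(N+2)$-th row,
\begin{equation*}
\det(1 - \tilde{A}) \;=\; \det(1 - \bar{A}) \;+\; (-1)^{2(N+2)} M_{N+2,N+2},
\end{equation*}
where $M_{N+2, N+2}$ is the $(N+2, N+2)$-minor of $1 - \bar{A}$, namely the $(N+2)\times(N+2)$ matrix obtained by deleting row $N+2$ and column $N+2$. The last row of $1 - \bar{A}$ equals $(0,\dots,0,0,-1,0)$, whose unique nonzero entry lies in column $N+2$; after deleting this column the entire row becomes zero, so $M_{N+2,N+2} = 0$ and $\det(1 - \tilde{A}) = \det(1 - \bar{A})$.

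\smallskip
\noindent\textbf{Group invariant.} I would read off the relations in $G(\tilde{A}) = \Z^{N+3}/(1-\tilde{A}^t)\Z^{N+3}$ from the columns of $1 - \tilde{A}$. Writing $e_1,\dots,e_{N+3}$ for the standard generators, columns $1,\dots,N$ give $e_j = \sum_{i=1}^{N-1} A(i,j) e_i + A(N,j)e_{N+1}$, column $N+1$ gives $e_{N+1} = e_N + e_{N+2}$, column $N+3$ gives $e_{N+2} = 0$, while the new relation from column $N+2$ (the only one that differs from the $\bar{A}$ case) reads $e_{N+2} = e_{N+1} + e_{N+3}$. Eliminating successively yields $e_{N+2} = 0$, $e_{N+1} = e_N$, $e_{N+3} = -e_N$, and substituting back into the first $N$ relations produces exactly the defining relations of $G(A)$. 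Thus the map $e_i \mapsto e_i$ ($1 \le i \le N$) induces an isomorphism $G(\tilde{A}) \cong G(A)$, under which $[(1,\dots,1)]$ maps to $e_1 + \cdots + e_{N-1} + e_N + e_N + 0 + (-e_N) = u_A$.

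\smallskip
Combining these calculations with the preceding theorem gives $(G(\tilde{A}), u_{\tilde{A}}) \cong (G(\bar{A}), u_{\bar{A}})$, whence R{\o}rdam's theorem yields ${\mathcal{O}}_{\bar{A}} \cong {\mathcal{O}}_{\tilde{A}}$, together with $\det(1-\bar{A}) = \det(1-\tilde{A})$. Nothing here is technically deep; the only mild obstacle is to keep the bookkeeping of the seven distinguished rows/columns straight, both in the cofactor argument and in the reduction of the group presentation.
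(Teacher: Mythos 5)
Your proof is essentially correct, but it takes a genuinely different route from the paper. The paper does not recompute any invariants of $\tilde{A}$ at all: it observes that row $N+2$ of $\bar{A}$ decomposes as $\bar{A}_{N+2}=E_{N+1}+\bar{A}_{N+3}$ while $\tilde{A}_{N+2}=E_{N+1}+E_{N+3}$, so that $\tilde{A}$ is obtained from $\bar{A}$ by a single \emph{primitive transfer} in the sense of Enomoto--Fujii--Watatani, and then quotes their Theorem 3.7 for the isomorphism ${\mathcal{O}}_{\bar{A}}\cong{\mathcal{O}}_{\tilde{A}}$ and their Theorem 8.4 for the determinant equality. That argument is elementary and does not invoke any classification theorem, whereas you prove $(G(\tilde{A}),u_{\tilde{A}})\cong(G(A),u_A)$ by hand and then appeal to R{\o}rdam's classification --- a much heavier tool, which moreover requires you to check (and you do not) that $\tilde{A}$ is irreducible and not a permutation matrix. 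What your route buys is a self-contained verification of the invariants: your cofactor computation of $\det(1-\tilde{A})=\det(1-\bar{A})$ is clean and correct, and your elimination of generators is the right idea. Two small defects: first, for $G(\tilde{A})=\Z^{N+3}/(1-\tilde{A}^t)\Z^{N+3}$ the relations are the \emph{rows} of $1-\tilde{A}$ (equivalently the columns of $1-\tilde{A}^t$), namely $e_j=\sum_i\tilde{A}(j,i)e_i$; reading off the columns of $1-\tilde{A}$ as you do computes $\BF(\tilde{A})=\Z^{N+3}/(1-\tilde{A})\Z^{N+3}$ instead. The row-based elimination works just as well (one gets $e_{N+2}=0$, $e_{N+3}=-e_{N+1}$, $e_N=e_{N+1}$, and then the relations of $G(A)$), so this is a transposition slip rather than a gap, but as written your reduction lands on the relations of $G(A^t)$, not $G(A)$. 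Second, in the final displayed vector of your group computation the $(N+1)$-st slot should carry $e_{N+1}=e_N$ and the $N$-th slot $e_N$ itself, which is what you intend but worth stating precisely since the whole point of the pointed invariant is this bookkeeping.
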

\begin{proof}
Let us denote by
$\bar{A}_{i}$ the $i$th row vector of the matrix $\bar{A}$ of size $N+3$.
We put $E_i$ the row vector of size $N+3$ such that
$E_i = (0,\dots,0,\overset{i}{1},0,\dots,0)$
where the $i$th component is one, and the other components are zero.
Then we have
$\bar{A}_{N+2} = E_{N+1} + \bar{A}_{N+3}$.
Since the $(N+2)$th row ${\tilde{A}}_{N+2}$ of $\tilde{A}$
is  
$\tilde{A}_{N+2} = E_{N+1} + E_{N+3}$,
and the other rows of $\tilde{A}$ are the same as those of $\bar{A}$,
the matrix $\tilde{A}$ is obtained from $\bar{A}$ by the primitive transfer 
$$
\bar{A} \quad 
\underset{E_{N+1} + \bar{A}_{N+3} \rightarrow \tilde{A}_{N+2}}{\Longrightarrow}
\quad \tilde{A}
$$
in the sense of \cite[Definition 3.5]{EFW}.
We obtain that 
  ${\mathcal{O}}_{\bar{A}}$
is isomorphic to
${\mathcal{O}}_{\tilde{A}}$
by \cite[Theorem 3.7]{EFW}, and
$\det(1 -\bar{A}) = \det( 1- \tilde{A})$
by \cite[Theorem 8.4]{EFW}.
\end{proof}

Before ending this short note, we  refer to differences among  
the three  matrices $A_{-}, \bar{A}, \tilde{A}$ 
from a view point of dynamical system.
As $(G(A_{-}), \det(1 - A_{-}))
=(G(\bar{A}), \det(1 - \bar{A}))=(G(\tilde{A}), \det(1 - \tilde{A}))$,
there is a possibility that their  two sided topological Markov shifts
$(\bar{X}_{A_{-}}, \bar{\sigma}_{A_{-}}),
(\bar{X}_{\bar{A}}, \bar{\sigma}_{\bar{A}}),
(\bar{X}_{\tilde{A}}, \bar{\sigma}_{\tilde{A}})$
are topologically conjugate.
We however know 
that 
they are not topologically conjugate to each other in general
by the following example.
Denote by $p_n(\bar{\sigma}_A)$ the cardinal number of the $n$-periodic points
$\{ x \in \bar{X}_A \mid \bar{\sigma}_A^n(x) =x \}$ of the topological Markov shift
$(\bar{X}_A, \bar{\sigma}_A)$.
The zeta function $\zeta_A(z)$
for $(\bar{X}_A, \bar{\sigma}_A)$
is defined by 
\begin{equation*}
\zeta_A(z) = \exp\left( \sum_{n=1}^\infty \frac{p_n(\bar{\sigma}_A)}{n}z^n\right)
\qquad
(cf. \cite{LM}).
\end{equation*}
It is well-known that the formula
$\zeta_A(z) = \frac{1}{\det(1-zA)}$ 
holds (\cite{BL}).
Let us denote by 
$2_{-}, \bar{2}, \tilde{2}$ the matrices
$A_{-}, \bar{A}, \tilde{A}$
for
$
\left[\begin{smallmatrix}
1 & 1  \\
1 & 1 
\end{smallmatrix}\right]
$
respectively.
It is direct to see that 
\begin{gather*}
\zeta_{2_{-}}(z)
= \frac{1}{1 - 4z + 3 z^2 + 2 z^3 - z^4}, \\
\zeta_{\bar{2}}(z)=\frac{1}{ 1 - 3z + 4 z^3 - z^4}, \qquad
\zeta_{\tilde{2}}(z)=\frac{1}{ 1 - 3z + z^2 + z^3 + z^4}.
\end{gather*}
The zeta function is invariant under topological conjugacy so that 
$(\bar{X}_{2_{-}}, \bar{\sigma}_{2_{-}}), 
(\bar{X}_{\bar{2}}, \bar{\sigma}_{\bar{2}}),
(\bar{X}_{\tilde{2}}, \bar{\sigma}_{\tilde{2}})$ 
are not topologically conjugate to each other.

\medskip

This paper is a revised version of the paper entitled 
``Continuous orbit equivalence of topological Markov shifts and Cuntz splice''
arXiv:1511.01193v2 [math.OA].

\medskip

{\it Acknowledgment.}
This work was supported by JSPS KAKENHI Grant Number 15K04896.

\end{document}